\title{The rigidity of $\mathbb{S}^3\times\mathbb{R}$ under ancient Ricci flow}
\author{Yongjia Zhang}
\numberwithin{equation}{section}
\begin{document}
\maketitle

In this paper we generalize the neck-stability theorem of Kleiner-Lott \cite{kleiner2014singular} to a special class of four-dimensional nonnegatively curved Type I $\kappa$-solutions, namely, those whose asymptotic shrinkers are the standard cylinder $\mathbb{S}^3\times\mathbb{R}$. We use this stability result to prove a rigidity theorem: if a four-dimensional Type I $\kappa$-solution with nonnegative curvature operator has the standard cylinder $\mathbb{S}^3\times\mathbb{R}$ as its asymptotic shrinker, then it is exactly the cylinder with its standard shrinking metric.

\tableofcontents

\section{Introduction}
The Ricci flow has been playing an important role in geometric analysis ever since its invention by Hamilton \cite{hamilton1982three}. Perelman's insightful understanding of the Ricci flow in dimension three leads to his solution to the geometrization conjecture \cite{perelman2002entropy} \cite{perelman2003ricci}. Today the Ricci flow in higher dimensions, especially in dimension four, is still of great interest and importance, and yet not well-understood. Never too much can be done to understand the ancient solutions and to study the singularity formation of the Ricci flow in higher dimensions. As a continuation of classification of three-dimensional Type I $\kappa$-solutions \cite{ni2009closed} \cite{zhang2017three}, we work on four-dimensional Type I $\kappa$-solutions in this paper. First recall the following definition.

\newtheorem{Definition_1}{Definition}[section]
\begin{Definition_1} \label{Definition_1}
An ancient solution to the Ricci flow $(M,g(t))_{t\in(-\infty,0]}$ is called a \emph{$\kappa$-solution} if it is $\kappa$-noncollapsed on all scales and has bounded curvature on every time slice. A $\kappa$-solution is called \emph{Type I} if its Riemann curvature tensor satisfies
\begin{eqnarray} \label{eq:Type_I}
|Rm\left(g(t)\right)|\leq\frac{C_I}{|t|},
\end{eqnarray}
for all $t\in(-\infty,0)$, where $C_I<\infty$ is a constant that we name as the \emph{curvature bound coefficient}.
\end{Definition_1}

\bigskip

It is known that every $\kappa$-solution either with nonnegative curvature operator or with Type I curvature bound has a backward blow-down shrinking gradient Ricci soliton, or \textit{asymptotic shrinker} for short; see Perelman \cite{perelman2002entropy} and Naber \cite{naber2010noncompact}. We deal with only the case when the asymptotic shrinker is the cylinder $\mathbb{S}^3\times\mathbb{R}$. Our main theorem is the following.

\newtheorem{Theorem_Main}[Definition_1]{Theorem}
\begin{Theorem_Main}\label{Theorem_Main}
Let $(M^4,g(t))_{t\in(-\infty,0]}$ be a four-dimensional simply connected nonnompact Type I $\kappa$-solution to the Ricci flow with nonnegative curvature operator. If an asymptotic shrinker of $(M,g(t))$ is $\mathbb{S}^3\times\mathbb{R}$ with its standard metric and standard potential function, then $(M,g(t))$ is the same cylinder with the standard shrinking metric.
\end{Theorem_Main}

\bigskip

\noindent\textbf{Remarks:}
\begin{enumerate}[(1)]
\item In the case of dimension three we were able to deal with any Type I $\kappa$-solution, not only because we automatically have nonnegative curvature operator by the Hamilton-Ivey pinching estimate, but also because there are only two noncompact Ricci shrinkers, $\mathbb{S}^2\times\mathbb{R}$ and its $\mathbb{Z}_2$ quotient, and the latter does not arise as asymptotic shrinker unless from itself. The assumption on the asymptotic shrinker made in Theorem \ref{Theorem_Main} gives a lower bound of the reduced volume function, which rules out all other possibility of blow-down limits; see section 3 below for more details.

\item The condition of Type I curvature bound cannot be dropped, since the Bryant soliton is an obvious counterexample. The condition of nonnegative curvature operator can be dropped in the a priori estimates of the reduced distance function but is needed for some geometric purpose, for instance splitting at infinity \cite{perelman2002entropy}, the Gromoll-Meyer theorem, Hamilton's maximum principle \cite{hamilton1986four}, and Perelman's $\kappa$-compactness theorem \cite{perelman2002entropy}.
\end{enumerate}
\bigskip

Before we proceed to introduce our principal technique, we introduce the definition of an $\varepsilon$-neck.

\newtheorem{Definition_2}[Definition_1]{Definition}
\begin{Definition_2} \label{Definition_2}
Let $(M^4,g(t))$ be a solution to the Ricci flow. A point $(x,t)$ is called the center of a (strong) $\varepsilon$-neck, if on the space-time product $\displaystyle B_{g(t)}(x,\varepsilon^{-1}R(x,t)^{-\frac{1}{2}})\times[t-R(x,t)^{-1},t]$, the Ricci flow $g(t)$ is defined, and, after scaling by $R(x,t)$ and shifting time, is $\varepsilon$-close to a correspondent piece of a shrinking round cylinder $(\mathbb{S}^3\times\mathbb{R},g_{cyl}(t),(x_0,-1))_{t\in(-\infty,0)}$ in the $\displaystyle C^{\lfloor\varepsilon^{-1}\rfloor}$ topology. Here (and in the rest of this paper) $R$ stands for the scalar curvature.
\end{Definition_2}

\bigskip

The following generalized neck-stability theorem, modelled on Kleiner-Lott \cite{kleiner2014singular}, is our main technique in the proof of the main theorem.
\\

\newtheorem{Proposition_Main}[Definition_1]{Proposition}
\begin{Proposition_Main}\label{Proposition_Main}
There is a positive function $\delta=\delta(\kappa,C_I)$ with the following property. For any $\kappa>0$, any $C_I<\infty$, and any small positive numbers $\delta_0$, $\delta_1\in (0,\delta(\kappa,C_I))$, there exists $T=T(\kappa,\delta_0,\delta_1,C_I)\in(-\infty,0)$, such that the following holds. suppose
\begin{enumerate}[(a)]
  \item $(M,g(t))_{t\in(-\infty,0]}$ is a noncompact Type I $\kappa$-solution to the Ricci flow with nonnegative curvature operator, and
  \item An asymptotic shrinker of $(M,g(t))$ is the standard cylinder $\mathbb{S}^3\times\mathbb{R}$, and
  \item The curvature bound coefficient of $(M,g(t))$ is less than or equal to $C_I$, and
  \item $x_0\in M$, $R(x_0,0)=1$, and $x_0$ is the center of a $\delta_0$-neck.
\end{enumerate}
Then for all $t<T$, $(x_0,t)$ is the center of a $\delta_1$-neck.
\end{Proposition_Main}

\bigskip

\noindent\textbf{Remark:} Proposition \ref{Proposition_Main} holds if one replaces the nonnegative curvature operator condition with the condition that $M\times\mathbb{R}^2$ has nonnegative isotropic curvature, which was considered by Brendle \cite{brendle2009generalization}.

\bigskip

This paper is organized as follows. In section 2 we introduce some fundamental results on the $\mathcal{L}$-geometry. In section 3 we prove Proposition \ref{Proposition_Main}. In section 4 we apply Proposition \ref{Proposition_Main} to prove Theorem \ref{Theorem_Main}. In this paper, we always work with noncompact $\kappa$-solutions.
\\

\section{Preliminaries on the $\mathcal{L}$-geometry}

In this section we review some basic facts of Perelman's reduced geometry \cite{perelman2002entropy}, of which \cite{chow2007ricci} and \cite{morgan2007ricci} are good references. Let $(M,g(t))_{t\in[T,0]}$ be a Ricci flow and $x_0$ be a fixed point on $M$. Then the $L$ function based at $(x_0,0)$ is defined as
\begin{eqnarray*}
L_{(x_0,0)}(x,t)=\inf_{\gamma}\int_{0}^{|t|}\sqrt{\tau}\left(R(\gamma(\tau),-\tau)+|\dot{\gamma}(\tau)|_{g(-\tau)}^2\right)d\tau,
\end{eqnarray*}
where $t\in[T,0)$, $x\in M$, and the infimum is taken among all smooth curves $\gamma:[0,|t|]\rightarrow M$ with $\gamma(0)=x_0$ and $\gamma(|t|)=x$. The minimizing curve is called a shortest $\mathcal{L}$-geodesic, which satisfies a second order linear ordinary differential equation depending only on the local geometry. Then the \textit{reduced distance} function based at $(x_0,0)$ is defined as
\begin{eqnarray*}
l_{(x_0,0)}(x,t)=\frac{1}{2\sqrt{|t|}}l_{(x_0,0)}(x,t).
\end{eqnarray*}
For notational simplicity, we often write $l_{(x_0,0)}$ as $l$ when the base point is understood. From Perelman \cite{perelman2002entropy} we know that the $l$ function satisfies some nice differential inequalities, among which are
\begin{eqnarray}
-\frac{\partial}{\partial t}l-\Delta l+|\nabla l|^2-R-\frac{n}{2t}&\geq& 0,\label{eq:diffe_eq_1}
\\
2\Delta l- |\nabla l|^2+R-\frac{l-n}{t}&\leq& 0.
\end{eqnarray}
The most striking fact is the monotonicity of the following \textit{reduced volume}
\begin{eqnarray*}
\mathcal{V}_{(x_0,0)}(g(t))=\frac{1}{(4\pi|t|)^{\frac{n}{2}}}\int_M e^{-l_{(x_0,0)}(x,t)}dg_t(x).
\end{eqnarray*}
$\mathcal{V}_{(x_0,0)}(g(t))$ is always monotonically nondecreasing in $t$ under the Ricci flow. We often write $\mathcal{V}_{(x_0,0)}(g(t))$ as $\mathcal{V}(t)$ for simplicity.
\\

It is known that for ancient solutions to the Ricci flow, either under the assumption of nonnegative curvature operator or under the assumption of Type I curvature bound, the reduced distance can be estimated. The most important estimates are the following:
\begin{eqnarray}
|\nabla l|^2+R&\leq&\frac{C(l+1)}{|t|}, \label{eq:l_1}
\\
-C(l(x,t)+1)+\frac{c}{|t|}dist_{g(t)}(x,y)^2\leq l(y,t)&\leq&C(l(x,t)+1)+\frac{C}{|t|}dist_{g(t)}(x,y)^2, \label{eq:l_2}
\end{eqnarray}
where $l$ is the reduced distance function based at some point at $t=0$. In the case of nonnegative curvature operator, the constants $c$ and $C$ depend only on the dimension, whereas they also depend on $C_I$ in the Type I case. One may refer to Morgan-Tian \cite{morgan2007ricci} and Naber \cite{naber2010noncompact} for (\ref{eq:l_1}) and (\ref{eq:l_2}) in these two cases, respectively. Among these estimates, the least obvious one is the first inequality in (\ref{eq:l_2}); one may in particular refer to Lemma 9.25 in \cite{morgan2007ricci} for its proof. We emphasize here that by (\ref{eq:l_2}), centered at points where $l$ is uniformly bounded, the integrand $(4\pi|t|)^{-\frac{n}{2}}e^{-l}$ of the reduced volume behaves nicely as a Gaussian kernel, hence, as we will show below, along these points, the reduced volumes also converge along with the pointed smooth convergence of the Ricci flows.
\\

We collect some basic results concerning the reduced geometry and compactness of $\kappa$-solutions. They are modified from some well-known results from the literature; see \cite{kleiner2008notes}, \cite{kleiner2014singular}, and \cite{morgan2007ricci} for instance.
\\

\newtheorem{Lemma_2_1}{Lemma}[section]
\begin{Lemma_2_1} \label{Lemma_2_1}
Let $\displaystyle\{(M_i^n,g_i(t),(x_i,0))_{t\in(-\infty,0]}\}_{i=1}^\infty$ be a sequence of $\kappa$-solutions of dimension $n\geq 3$ with nonnegative curvature operator. Let $l_i$ be the reduced distance based at $(x_i,0)$ and $\mathcal{V}_i$ be the correspondent reduced volume. Assume that $\limsup_{i\rightarrow\infty} R(x_i,0)<\infty$ and that every $g_i(t)$ is Type I with respect to a universal curvature bound coefficient $C_I$. Then by passing to a subsequence we have that
\begin{eqnarray*}
(M_i,g_i(t),(x_i,0),l_i)\rightarrow (M_\infty,g_\infty(t),(x_\infty,0),l_\infty).
\end{eqnarray*}
Here $(M_\infty,g_\infty(t),(x_\infty,0))_{t\in(-\infty,0]}$ is a $\kappa$-solution of Type I with $C_I$ being the curvature bound coefficient, and $l_\infty$ is the reduced distance function based at $(x_\infty,0)$ satisfying (\ref{eq:diffe_eq_1})---(\ref{eq:l_2}), which are understood in the sense of distribution whenever necessary. The convergence of the Ricci flows is in the pointed smooth Cheeger-Gromov-Hamilton sense \cite{hamilton1995compactness}. The convergence of $\{l_i\}_{i=1}^\infty$ is in the $C_{loc}^\alpha$ or weak $\left(W_{loc}^{1,2}\right)^*$ sense on $M_\infty\times(-\infty,0)$.  Moreover, $\mathcal{V}_i(t)$ converges locally uniformly on $(-\infty,0)$ to $\mathcal{V}_\infty(t)$, where
\begin{eqnarray*}
\mathcal{V}_\infty(t)=\frac{1}{(4\pi|t|)^{\frac{n}{2}}}\int_{M_\infty}e^{-l_\infty(x,t)}dg_\infty(t).
\end{eqnarray*}
\end{Lemma_2_1}

\bigskip

\noindent\textbf{Remarks:}
\begin{enumerate}[(1)]
  \item When the dimension is higher than $3$, Perelman \cite{perelman2002entropy} only proved a precompactness theorem for $\kappa$-solutions, to wit, one does not know whether or not the curvature is bounded on the limit flow. Hence if one drops the uniformly Type I condition for this sequence, one still obtains a similar compactness theorem, except that the limit may not have bounded curvature.
  \item In this lemma, $(M_\infty,g_\infty(t),(x_\infty,0))_{t\in(-\infty,0]}$ may not have bounded curvature at time $0$ slice. If this is the case, we point out that since $l_\infty$ depends only on local geometry, it is still well-defined and satisfies (\ref{eq:diffe_eq_1})---(\ref{eq:l_2}).
\end{enumerate}

\begin{proof}[Proof of Lemma \ref{Lemma_2_1}]
The subconvergence of the ancient flows $(M_i,g_i(t),(x_i,0))\rightarrow (M_\infty,g_\infty(t),(x_\infty,0))$ is a standard result by Perelman \cite{perelman2002entropy}, and the Type I condition is carried to the limit by the convergence. To prove the convergence of the reduced distance and the reduced volume, we fix an arbitrary $A\in(1,\infty)$, and consider any $t\in[-A,-A^{-1}]$. By Proposition 2.1 of Naber \cite{naber2010noncompact}, we have that
\begin{eqnarray}\label{eq:estimates}
l_i(x_i,t)&\leq& C,
\end{eqnarray}
where $C$ is a  positive constant depending only on $C_I$. From the estimates (\ref{eq:l_1}) and (\ref{eq:l_2}), the convergence of $l_i$ follows immediately. Moreover, since $l_i$ is determined by the $\mathcal{L}$-geodesics, which depend only on local geometry, from the convergence of the Ricci flows we know that $l_\infty$ is the reduced distance based at $(x_\infty,0)$. By (\ref{eq:l_2}), the integrand $(4\pi|t|)^{-\frac{n}{2}}e^{-l_i(x,t)}$ of the reduced volume has uniform rapid decay at infinity. Because of the curvature nonnegativity, there is a uniform volume growth upper bound for every $g_i(t)$ by the Bishop-Gromov volume comparison theorem. It follows that for every $\eta>0$, there exists a $B_0<\infty$ depending also on $A$, such that
\begin{eqnarray*}
\mathcal{V}_i(t)-\eta\leq \int_{B_{g_i(t)}(x_i,B_0)}(4\pi|t|)^{-\frac{n}{2}}e^{-l_i(x,t)}dg_i(t)&\leq&\mathcal{V}_i(t),
\\
\mathcal{V}_\infty(t)-\eta\leq \int_{B_{g_\infty(t)}(x_\infty,B_0)}(4\pi|t|)^{-\frac{n}{2}}e^{-l_\infty(x,t)}dg_\infty(t)&\leq&\mathcal{V}_\infty(t),
\\
(4\pi|t|)^{-\frac{n}{2}}\left|\int_{B_{g_i(t)}(x_i,B_0)}e^{-l_i(x,t)}dg_i(t)-\int_{B_{g_\infty(t)}(x_\infty,B_0)}e^{-l_\infty(x,t)}dg_\infty(t)\right|&\leq&\eta,
\end{eqnarray*}
for all $t\in[-A,-A^{-1}]$ and for all $i$ large enough. The convergence of the reduced volume follows immediately.
\end{proof}

\bigskip

The following lemma differs from Lemma \ref{Lemma_2_1} in the choice of the base points. The convergence of ancient flows follows directly from the uniform Type I condition instead of Perelman's compactness theorem.

\newtheorem{Lemma_2_2}[Lemma_2_1]{Lemma}
\begin{Lemma_2_2}\label{Lemma_2_2}
Let $\{(M_i^n,g_i(t),(x_i,0))_{t\in(-\infty,0]}\}_{i=1}^\infty$ be a sequence of $\kappa$-solutions of dimension $n\geq3$ with nonnegative curvature operator. Let $l_i$ be the reduced distance based at $(x_i,0)$ and $\mathcal{V}_i$ be the correspondent reduced volume. Assume that $\limsup_{i\rightarrow\infty}l_i(y_i,-1)<\infty$, where $y_i\in M_i$, and that every $g_i(t)$ is Type I with respect to a universal curvature bound coefficient $C_I$. Then by passing to a subsequence, we have that
\begin{eqnarray*}
(M_i,g_i(t),(y_i,-1),l_i)\rightarrow (M_\infty,g_\infty(t),(y_\infty,-1),l_\infty),
\end{eqnarray*}
in the same sense as in Lemma \ref{Lemma_2_1}. Moreover, the following hold.
\begin{enumerate}[(a)]
  \item $(M_\infty,g_\infty(t),(y_\infty,-1))_{t\in(-\infty,0)}$ is either a nonnegatively curved Type I $\kappa$-solution with the curvature bound coefficient $C_I$, or the Gaussian shrinker, that is, the static Euclidean space.
  \item $l_\infty$ is a locally Lipschitz function on $M_\infty\times(-\infty,0)$ satisfying (\ref{eq:diffe_eq_1})---(\ref{eq:l_2}), which are understood in the sense of distribution whenever necessary.
  \item $\mathcal{V}_i(t)$ converges locally uniformly on $(-\infty,0)$ to $\mathcal{V}_\infty(t)$, where
      \begin{eqnarray*}
      \mathcal{V}_\infty(t)=\frac{1}{(4\pi|t|)^{\frac{n}{2}}}\int_{M_\infty}e^{-l_\infty(x,t)}dg_\infty(t)
      \end{eqnarray*}
  \item If $\mathcal{V}_\infty$ is a constant on an interval $[t_0,t_1]\subset(-\infty,0)$, then $(M_\infty,g_\infty(t),l_\infty)$ is the canonical form of a shrinking gradient Ricci soliton with the potential function being $l_\infty$.
\end{enumerate}
\end{Lemma_2_2}

\noindent\textbf{Remarks:}
\begin{enumerate}[(1)]
  \item In the above lemma either the nonnegative curvature operator assumption or the uniform Type I assumption may be dropped; there is a similar estimate as (\ref{eq:l_3}) below in the Type I case; see Lemma 2.5 in Naber \cite{naber2010noncompact}. Notice that an individual Type I assumption is not sufficient for this result since the estimates in Naber \cite{naber2010noncompact} depend on $C_I$.
  \item If $\mathcal{V}_\infty$ is indeed a constant for a period of time, then by Perelman's monotonicity formula \cite{perelman2002entropy} one may obtain that
      \begin{eqnarray*}
      2\Delta l_\infty -|\nabla l_\infty|^2+R+\frac{l_\infty-n}{-t}=0,
      \end{eqnarray*}
      see Proposition 9.20 in Morgan-Tian \cite{morgan2007ricci}. It follows that the potential function $l_\infty$ is normalized in the way that $\mathcal{V}_\infty$ is equal to the asymptotic reduced volume in the sense of Yokota \cite{yokota2009perelman} \cite{yokota2012addendum} or Gaussian density in the sense of Cao-Hamilton-Ilmanen \cite{cao2004gaussian}.
\end{enumerate}

\begin{proof}[Proof of Lemma \ref{Lemma_2_2}]
By Corollary 9.10 in Morgan-Tian \cite{morgan2007ricci}, we have that
\begin{eqnarray}\label{eq:l_3}
\left|\frac{\partial}{\partial t}l\right|\leq \frac{C}{|t|}l,
\end{eqnarray}
where $C$ is a constant. By integrating (\ref{eq:l_3}) we have that for all $A\in(0,\infty)$, there exists $C(A)<\infty$, such that
\begin{eqnarray*}
l_i(y_i,t)\leq C(A),
\end{eqnarray*}
whenever $t\in[-A,-A^{-1}]$ and for all $i$. The convergence of $l_i$ and $\mathcal{V}_i$ follows from (\ref{eq:l_1}) and (\ref{eq:l_2}) in the same manner as in the proof of Lemma \ref{Lemma_2_1}. (d) is a simple consequence of Perelman's monotonicity formula.
\end{proof}

\bigskip

The following lemma indicates the behaviour of the reduced distance on the shrinking round cylinder. This Lemma is the same as Lemma 6.7 in Kleiner-Lott \cite{kleiner2014singular}. Even though they worked with three-dimensional cylinder, since there is no essential geometric difference, the same holds on the four-dimensional cylinder.

\newtheorem{Lemma_2_3}[Lemma_2_1]{Lemma}
\begin{Lemma_2_3} \label{Lemma_2_3}
Let $(M,g(t),(x,0))_{t\in(-\infty,0]}$ be the shrinking round cylinder $\mathbb{S}^3\times\mathbb{R}$ with $R\equiv 1$ at $t=0$. Let $l$ be the reduced distance based at $(x,0)$. Let $\tau_i\nearrow\infty$ be a sequence of positive numbers. Then we have
\begin{eqnarray*}
(M,\tau_i^{-1}g(t\tau_i),(x,-1),l(t\tau_i))\rightarrow (Cyl,g_{cyl}(t),(x_0,-1),l_{cyl})
\end{eqnarray*}
in the same sense as in Lemma \ref{Lemma_2_1}. Moreover, the following hold.
\begin{enumerate}[(a)]
  \item $\displaystyle \lim_{t\rightarrow\infty} l(x,t)=\frac{3}{2}$;
  \item $(Cyl,g_{cyl}(t))_{t\in(-\infty,0)}$ is also the shrinking round cylinder $\mathbb{S}^3\times\mathbb{R}$ that forms a forward singularity at $t=0$;
  \item If we denote the coordinate in the $\mathbb{R}$ direction on $\mathbb{S}^3\times\mathbb{R}$ as $z$, such that $z(x_0)=0$, then we have
      \begin{eqnarray} \label{eq:l_cyl}
      l_{cyl}(y,t)=\frac{3}{2}+\frac{z^2}{-4t}.
      \end{eqnarray}
\end{enumerate}
\end{Lemma_2_3}

\bigskip

From Lemma \ref{Lemma_2_2}(d) we know that $(Cyl,g_{cyl}(t),(x_0,-1),l_{cyl})$ is (the normalized canonical form of) the cylindrical shrinker $\mathbb{S}^3\times\mathbb{R}$; we will use this notation in the rest of the paper.

\section{A generalized neck-stability theorem}

In this section we prove Proposition \ref{Proposition_Main}. Our argument is modelled on section 6 of Kleiner-Lott \cite{kleiner2014singular}. The first question one might ask is whether or not the asymptotic shrinker is unique. The following lemma shows that this is the case at least when the asymptotic shrinker is the round cylinder.
\\

\newtheorem{Lemma_3}{Lemma}[section]
\begin{Lemma_3}\label{Lemma_3}
Let $(M^4,g(t))_{t\in(-\infty,0]}$ be a $\kappa$-solution to the Ricci flow with nonnegative curvature operator. Let $l$ and $\bar{l}$ be reduced distances based at points in $M\times\{0\}$ and $M\times(-\infty,0]$, respectively. Let $\{p_i\}_{i=1}^\infty\subset M$, $\{\bar{p}_i\}_{i=1}^\infty\subset M$, $\tau_i\nearrow\infty$, and $\bar{\tau}_i\nearrow\infty$ be such that $\limsup_{i\rightarrow\infty}l(p_i,-\tau_i)<\infty$ and $\limsup_{i\rightarrow\infty}\bar{l}(\bar{p}_i,-\bar{\tau}_i)<\infty$. Then if
\begin{eqnarray*}
(M,\tau_i^{-1}g(t\tau_i),(p_i,-1),l(t\tau_i))\rightarrow (Cyl,g_{cyl}(t),(y,-1),l_{cyl}),
\end{eqnarray*}
for some $y\in Cyl$ along some subsequence. Then
\begin{eqnarray*}
(M,\bar{\tau}_i^{-1}g(t\bar{\tau}_i),(\bar{p}_i,-1),\bar{l}(t\bar{\tau}_i))\rightarrow (Cyl,g_{cyl}(t),(y',-1),l_{cyl}),
\end{eqnarray*}
along any convergent subsequence, where $y'\in Cyl$ may be different from $y$ and may be dependent on the subsequence.
\end{Lemma_3}

\begin{proof}
We denote the reduced distance correspondent to $l$ and $\bar{l}$ as $\mathcal{V}$ and $\bar{\mathcal{V}}$, respectively. By Lemma 3.1 in Yokota \cite{yokota2009perelman}, we have that $\lim_{t\rightarrow-\infty}\mathcal{V}(t)\leq\lim_{t\rightarrow-\infty}\bar{\mathcal{V}}(t)$. It follows from this fact and Lemma \ref{Lemma_2_2} that the limit of $\{(M,\bar{\tau}_i^{-1}g_i(t\bar{\tau}_i),(\bar{p}_i,-1))_{t\in(-\infty,0)}\}_{i=1}^\infty$ is the canonical form of a noncompact shrinking gradient Ricci soliton with nonnegative curvature operator, whose asymptotic reduced volume (Gaussian density) is no less than that of the shrinking cylinder. By the classification of four-dimensional shrinkers with nonnegative curvature operator (see \cite{naber2010noncompact}, \cite{li2016four}, and \cite{munteanu2017positively}), and by the value table given in \cite{cao2004gaussian}, we know this limit must be the shrinking cylinder.
\end{proof}

\bigskip

In this section, we consider all the four-dimensional nonnegatively curved Type I $\kappa$-solutions with respect to a universal curvature bound coefficient, and with their asymptotic shrinkers being the cylinder $\mathbb{S}^3\times\mathbb{R}$. For notational simplicity, we define the following set.

\newtheorem{Definition_3_0}[Lemma_3]{Definition}
\begin{Definition_3_0}\label{Definition_3_0}
Let $\mathcal{M}^4(\kappa,C_I)$ be the collection of all the four-dimensional ancient solutions $(M,g(t))_{t\in(-\infty,0]}$ satisfying all the following properties.
\begin{enumerate}[(a)]
  \item $g(t)$ has nonnegative curvature operator and is $\kappa$-noncollapsed for all $t\in(-\infty,0]$.
  \item The Riemann curvature tensor satisfies
   \begin{eqnarray*}
   |Rm|(g(t))\leq\frac{C_I}{|t|},
   \end{eqnarray*}
   for all $t\in(-\infty,0)$.
  \item The asymptotic shrinker of $(M,g(t))$ is the cylinder $\mathbb{S}^3\times\mathbb{R}$.
\end{enumerate}
\end{Definition_3_0}

\bigskip

We will only consider the $\kappa$-solutions in the set $\mathcal{M}^4(\kappa,C_I)$; notice that $\mathcal{M}^4(\kappa,C_I)$ is invariant under parabolic scaling. In the following lemmas, every estimate constant will be dependent on $\kappa$ and $C_I$, and this dependence will be implicit in the statement of our results.
\\

\newtheorem{Lemma_3_1}[Lemma_3]{Lemma}
\begin{Lemma_3_1}\label{Lemma_3_1}
Let $1-\bar{\mu}$ be the asymptotic reduced volume of the cylindrical shrinker $
(Cyl,g_{cyl}(t),(x_0,-1),l_{cyl})$. For any $\mu\in(0,\bar{\mu})$, there exists a $T_1=T_1(\mu)\in(-\infty,0)$, such that the following holds. Let $(M,g(t))\in\mathcal{M}^4(\kappa,C_I)$ and $x\in M$ be such that $R(x,0)=1$. Let $\mathcal{V}(t)$ be the reduced volume centered at $(x,0)$. Then $\mathcal{V}(t)<1-\mu$ whenever $t<T_1$.
\end{Lemma_3_1}

\bigskip

\noindent\textbf{Remark:} The key point of Lemma \ref{Lemma_3_1} is that $\mathbb{S}^3\times\mathbb{R}$ has the largest asymptotic reduced volume among all possible asymptotic shrinkers. Without this assumption, one may consider the following counterexample. Denote the asymptotic reduced volume of $(\mathbb{S}^2\times\mathbb{R})/\mathbb{Z}_2$ as $\xi>0$, and let $x_i\in(\mathbb{S}^2\times\mathbb{R})/\mathbb{Z}_2$ such that $x_i\rightarrow\infty$. Then we have that $\lim_{t\rightarrow-\infty} \mathcal{V}_{(x_i,0)}(t)=\xi$ for every $i$, but $\lim_{t\rightarrow-\infty}\lim_{i\rightarrow\infty}\mathcal{V}_{(x_i,0)}(t)=2\xi$, since along $x_i$ the ancient flow converges to $\mathbb{S}^2\times\mathbb{R}$. It follows that Lemma \ref{Lemma_3_1} fails for $(\mathbb{S}^2\times\mathbb{R})/\mathbb{Z}_2$.
\\

\begin{proof}[Proof of Lemma \ref{Lemma_3_1}]
Suppose the lemma is not true, then for some $\mu\in(0,\bar{\mu})$ we have a sequence of counterexamples $\{(M_i,g_i(t))\}_{i=1}^\infty\subset\mathcal{M}^4(\kappa,C_I)$ and $x_i\in M_i$, such that for every $i$ it holds that $R_i(x_i,0)=1$ but $\mathcal{V}_i(t)\geq 1-\mu$ for all $t\in[-i,0)$. Here $\mathcal{V}_i$ is the reduced volume based at $(x_i,0)$, and we let $l_i$ be the reduced distance based at $(x_i,0)$. Using Lemma \ref{Lemma_2_1} we can extract a limit Type I $\kappa$-solution $(M_\infty,g_\infty(t),(x_\infty,0),l_\infty)$ form $\{(M_i,g_i(t),(x_i,0),l_i)\}_{i=1}^\infty$. Moreover $g_\infty(t)$ is nonflat and
\begin{eqnarray*}
\mathcal{V}_\infty(t)\geq 1-\mu,
\end{eqnarray*}
for every $t\in(-\infty,0)$. It follows that $(M_\infty,g_\infty(t))$ has a nonflat asymptotic shrinker whose asymptotic reduced volume is strictly greater than $1-\bar{\mu}$. By the classification of nonnegatively curved four-dimensional shrinkers \cite{naber2010noncompact} \cite{li2016four} \cite{munteanu2017positively} and by the value table in \cite{cao2004gaussian}, we know there exists no such shrinker; this is a contradiction.
\end{proof}

\bigskip

In the rest of this section, we often work with $(M,g(t),(x,-1),l)$, where $(M,g(t))\in\mathcal{M}^4(\kappa,C_I)$, and $l$ is the reduced distance function based at some point (not necessarily the same as $x$) on time $0$ slice. We define the following notion for the convenience of statement.

\newtheorem{Definition_3_2}[Lemma_3]{Definition}
\begin{Definition_3_2}\label{Definition_3_2}
$(M,g(t),(x,-1),l)$ is said to be $\varepsilon$-close to $(Cyl,g_{cyl}(t),(y,-1),l_{cyl})$ on the interval $[a,b]$, where $\varepsilon>0$ is a small number, $y\in Cyl$, and $-1\subset(a,b)\subset[a,b]\subset(-\infty,0)$, if on the space-time products $B_{g(-1)}(x,\varepsilon^{-1})\times[a,b]$ and $B_{g_{cyl}(-1)}(y,\varepsilon^{-1})\times[a,b]$, the Ricci flows $g(t)$ and $g_{cyl}(t)$ are $\varepsilon$-close in the $C^{\lfloor\varepsilon^{-1}\rfloor}$ topology, and the functions $l$ and $l_{cyl}$ are $\varepsilon$-close in the $C^0$ topology.
\end{Definition_3_2}

\bigskip

\newtheorem{Lemma_3_3}[Lemma_3]{Lemma}
\begin{Lemma_3_3}\label{Lemma_3_3}
For every $c>0$ and $A\in(0,1)$ there exists an $\varepsilon_1=\varepsilon_1(c,A)$, such that the following holds. Let $(M,g(t))\in\mathcal{M}^4(\kappa,C_I)$. Let $x\in M$ and $l$ be the reduced distance function based at $(x,0)$. Assume that
\begin{enumerate}[(a)]
  \item $(M,g(t),(x,-1),l)$ is $\varepsilon_1$-close to $(Cyl,g_{cyl}(t),(y,-1),l_{cyl})$ on the interval $[-A^{-1},-A]$, where $y\in Cyl$, and
  \item $\displaystyle l(x,t)< \frac{3}{2}+c$ for all $t\in[-1,-A]$.
\end{enumerate}
Then $\displaystyle l(x,t)<\frac{3}{2}+c$ for all $t\in[-A^{-1},-1]$.
\end{Lemma_3_3}

\begin{proof}
Suppose by contradiction that for some $c$ and $A$ there exists a sequence of counterexamples $\{(M_i,g_i(t))\}_{i=1}^\infty\in\mathcal{M}^4(\kappa,C_I)$ and $x_i\in M_i$ with the following properties.
\begin{enumerate}[(1)]
  \item $(M_i,g_i(t),(x_i,-1),l_i)$ is $i^{-1}$ close to $(Cyl,g_{cyl}(t),(y_i,-1),l_{cyl})$ on the interval $[-A^{-1},-A]$, where $y_i\in Cyl$, and
  \item $\displaystyle l_i(x_i,t)<\frac{3}{2}+c$ for all $t\in[-1,-A]$ and for all $i$, and
  \item There exists $t_i\in[-A^{-1},-1]$ such that $\displaystyle l_i(x_i,t_i)\geq \frac{3}{2}+c$.
\end{enumerate}
First we observe that $y_i$ must be within bounded distance from $x_0$ because of (\ref{eq:l_cyl}), and hence converges to some $y_\infty\in Cyl$ by passing to a (not relabelled) subsequence. Then we can use Lemma \ref{Lemma_2_2} to extract a limit from $\{(M_i,g_i(t),(x_i,-1),l_i)\}_{i=1}^\infty$. In view of the assumption (1) above we can assert that the limit of the Ricci flows is $(Cyl,g_{cyl}(t),(y_\infty,-1))$, and the limit of $l_i$ is exactly the same as $l_{cyl}$, at least on the interval $[-A^{-1},-A]$. It follows that $\displaystyle l_{cyl}(y_\infty,t)\leq \frac{3}{2}+c$ for all $t\in[-1,-A]$ but $\displaystyle l_{cyl}(y_\infty,t_1)\geq \frac{3}{2}+c$ for some $t_1\in[-A^{-1},-1]$. On the other hand, by (\ref{eq:l_cyl}) we have that $l_{cyl}(y_\infty,t)$ is either identically $\displaystyle \frac{3}{2}$ or strictly increasing in $t$; this is a contradiction.
\end{proof}

\bigskip

\newtheorem{Lemma_3_4}[Lemma_3]{Lemma}
\begin{Lemma_3_4}\label{Lemma_3_4}
For any $\varepsilon\in(0,1)$ and $C<\infty$, there exists a $T_2=T_2(\varepsilon,C)\in(-\infty,0)$, such that the following holds. Let $(M,g(t))\in\mathcal{M}^4(\kappa,C_I)$. Let $x\in M$ such that $R(x,0)=1$ and $l$ be the reduced distance function base at $(x,0)$. For every $t_0< T$ and $y_0\in M$, if $l(y_0,t_0)\leq C$, then $(M,|t_0|^{-1}g(t|t_0|),(y_0,-1),l(t|t_0|))$ is $\varepsilon$-close to $(Cyl,g_{cyl}(t),(y,-1),l_{cyl})$ on the interval $[-\varepsilon^{-1},-\varepsilon]$. Here $y\in Cyl$ is a point not necessarily the same as $x_0$.
\end{Lemma_3_4}

\begin{proof}
Suppose by contradiction that for some $\varepsilon$ and $C$ there is a sequence of counterexamples $\{(M_i,g_i(t))\}_{i=1}^\infty\subset\mathcal{M}^4(\kappa,C_I)$, $x_i\in M_i$, $y_i\in M_i$, and $t_i\searrow-\infty$ such that the following hold.
\begin{enumerate}[(1)]
  \item $R_i(x_i,0)=1$ for ever $i$, and
  \item $l_i(y_i,t_i)\leq C$ for every $i$, where $l_i$ is reduced distance centered at $(x_i,0)$, and
  \item $(M_i,|t_i|^{-1}g(t|t_i|),(y_i,-1),l_i(t|t_i|))$ is not $\varepsilon$-close to $(Cyl,g_{cyl}(t),(y,-1),l_{cyl})$ on the interval $[-\varepsilon^{-1},-\varepsilon]$ for any $y\in Cyl$.
\end{enumerate}
By passing to a sequence, we may assume $t_i\leq \varepsilon^{-1}T_1\left(\bar{\mu}-\frac{1}{i}\right)$ for every $i$, where $T_1$ is defined in Lemma \ref{Lemma_3_1}. By Lemma \ref{Lemma_2_2} we can extract a limit from the scaled flows $\{(M_i,|t_i|^{-1}g(t|t_i|),(y_i,-1),l_i(t|t_i|))\}$, whose limit we denote as $(M_\infty,g_\infty(t),(y_\infty,-1),l_\infty)$. Let $\mathcal{V}_\infty(t)$ be the reduced volume function correspondent to $l_\infty$, we have by Lemma \ref{Lemma_2_2}(c) that $\mathcal{V}_\infty(t)\equiv 1-\bar{\mu}$ for all $t\in(-\infty,\varepsilon]$. By Lemma \ref{Lemma_2_2}(d) we have that $(M_\infty,g_\infty(t),(y_\infty,-1),l_\infty)$ is the canonical form of a Ricci shrinker and hence must be $(Cyl,g_{cyl}(t),(y,-1),l_{cyl})$ for some $y\in Cyl$, in view of the value of its asymptotic reduced volume; this is a contradiction against assumption (3) above.

\end{proof}

\bigskip

\begin{proof}[Proof of Proposition \ref{Proposition_Main}]
The proof of Proposition \ref{Proposition_Main} is implied by the following claim.

\newtheorem*{Claim}{Claim}
\begin{Claim}
For all $c>0$, there exists $\delta=\delta(c)>0$ and $T_3=T_3(c)\in(-\infty,0)$, such that the following holds. Let $(M,g(t))\in\mathcal{M}^4(\kappa,C_I)$. Let $x\in M$ and $l$ be the reduced distance based at $(x,0)$. Assume that $R(x,0)=1$ and that $(x,0)$ is the center of a $\delta$-neck, then $\displaystyle l(x,t)<\frac{3}{2}+c$ for all $t< T_3$.
\end{Claim}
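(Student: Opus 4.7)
I would argue by contradiction. Suppose the claim fails for some $c>0$: there are sequences $\delta_i\searrow 0$ and $T_{3,i}\searrow -\infty$, ancient solutions $(M_i,g_i(t))\in\mathcal{M}^4(\kappa,C_I)$, base points $x_i\in M_i$ with $R_i(x_i,0)=1$ and $(x_i,0)$ the center of a $\delta_i$-neck, and times $t_i<T_{3,i}$ with $l_i(x_i,t_i)\geq 3/2+c$, where $l_i$ is the reduced distance based at $(x_i,0)$. The key device is the first exit time
\begin{eqnarray*}
s_i:=\sup\{t\leq 0:\, l_i(x_i,t)\geq \tfrac{3}{2}+c\}.
\end{eqnarray*}
Since $l_i(x_i,t)\to 0$ as $t\to 0^-$ while $l_i(x_i,t_i)\geq 3/2+c$, continuity of $t\mapsto l_i(x_i,t)$ gives $s_i\in[t_i,0)$, $l_i(x_i,s_i)=3/2+c$, and $l_i(x_i,t)<3/2+c$ for every $t\in(s_i,0)$.

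The first main step is to show $s_i\to -\infty$. By Lemma \ref{Lemma_2_1} extract a subsequential limit $(M_\infty,g_\infty(t),(x_\infty,0),l_\infty)$. Definition \ref{Definition_2} together with $\delta_i\to 0$ forces the time-$0$ slice of the limit to be $\mathbb{S}^3\times\mathbb{R}$ with $R(x_\infty,0)=1$, and in fact forces the flow on $[-1,0]$ to coincide with a shrinking round cylinder flow. Kotschwar's backward uniqueness for the Ricci flow---applied on any compact subinterval of $(-\infty,-1]$, where the Type I bound provides a uniform curvature bound---then propagates the cylinder structure to all of $(-\infty,0]$; hence $l_\infty$ is the cylindrical reduced distance centered at $x_\infty$ and $l_\infty(x_\infty,t)\equiv 3/2$ on $(-\infty,0)$. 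For any fixed $\bar t<0$ this forces $l_i(x_i,\bar t)<3/2+c$ for $i$ large, so $s_i\leq\bar t$ eventually, and letting $\bar t\to-\infty$ gives $s_i\to -\infty$.

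The second step rescales parabolically at $s_i$. Set $\tilde g_i(t):=|s_i|^{-1}g_i(t|s_i|)$ with the scale-invariant reduced distance $\tilde l_i(y,t)=l_i(y,t|s_i|)$, still based at $(x_i,0)$ in the rescaled flow. Then $\tilde l_i(x_i,-1)=3/2+c$ and $\tilde l_i(x_i,t)<3/2+c$ for every $t\in(-1,0)$. Fix $\varepsilon\in(0,\min(1/2,c/3))$. Since $s_i\to-\infty$ and $\tilde l_i(x_i,-1)\leq 3/2+c$, Lemma \ref{Lemma_3_4} with $C=3/2+c$ gives, for $i$ large, that $(M_i,\tilde g_i(t),(x_i,-1),\tilde l_i)$ is $\varepsilon$-close to $(Cyl,g_{cyl}(t),(y_i,-1),l_{cyl})$ on $[-\varepsilon^{-1},-\varepsilon]$ for some $y_i\in Cyl$.

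The contradiction then comes from the explicit formula \eqref{eq:l_cyl}. The $C^0$-closeness at $t=-1$ yields $l_{cyl}(y_i,-1)=3/2+z(y_i)^2/4\geq 3/2+c-\varepsilon$, hence $z(y_i)^2\geq 4(c-\varepsilon)$. Evaluating \eqref{eq:l_cyl} at $t=-1/2\in[-\varepsilon^{-1},-\varepsilon]$ gives $l_{cyl}(y_i,-1/2)=3/2+z(y_i)^2/2\geq 3/2+2(c-\varepsilon)$, and a second use of $\varepsilon$-closeness forces
\begin{eqnarray*}
\tilde l_i(x_i,-1/2)\geq \tfrac{3}{2}+2c-3\varepsilon,
\end{eqnarray*}
which contradicts $\tilde l_i(x_i,-1/2)<3/2+c$ since $\varepsilon<c/3$. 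The main obstacle is Step~1: identifying the Lemma \ref{Lemma_2_1} limit as the entire shrinking cylinder rather than an ancient flow with only one cylindrical time-slice, which is where backward uniqueness (or an equivalent splitting argument via Hamilton's strong maximum principle together with the classification of ancient flows on $\mathbb{S}^3$) is essential. Once $s_i\to-\infty$ is available, the rescaling at $s_i$ and the evaluation of \eqref{eq:l_cyl} at $t=-1/2$ are essentially book-keeping.
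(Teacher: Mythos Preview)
Your proof is correct and follows the same overall arc as the paper's: argue by contradiction, identify the limit as the shrinking cylinder via Lemma~\ref{Lemma_2_1} and Kotschwar's backward uniqueness, locate a first bad time, invoke Lemma~\ref{Lemma_3_4} at that scale, and derive a contradiction from the explicit form \eqref{eq:l_cyl} of $l_{cyl}$. One small inaccuracy: on the shrinking cylinder with $R(x_\infty,0)=1$, the reduced distance at the base point is not identically $3/2$; it tends to $0$ as $t\to 0^-$ and to $3/2$ as $t\to -\infty$ (Lemma~\ref{Lemma_2_3}(a)), and in fact the constant curve gives $l_\infty(x_\infty,t)<3/2$ for all $t<0$. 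This only strengthens the inequality $l_\infty(x_\infty,\bar t)<3/2+c$ you actually use, so Step~1 goes through unchanged.

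The substantive difference from the paper is in the endgame. The paper packages the monotonicity of $t\mapsto l_{cyl}(y,t)$ into a separate compactness statement (Lemma~\ref{Lemma_3_3}), and to apply it must set up its first bad time $t_i$ relative to a carefully chosen interval $[T_0,T_0/9]$ and then rescale at $t_i/3$ so that the hypothesis of Lemma~\ref{Lemma_3_3} on $[-1,-1/3]$ is available. You instead take the last bad time $s_i$ over all of $(-\infty,0)$, rescale directly at $s_i$, and read off the contradiction by evaluating \eqref{eq:l_cyl} at $t=-1/2$: the $\varepsilon$-closeness at $t=-1$ forces $z(y_i)^2\geq 4(c-\varepsilon)$, hence $l_{cyl}(y_i,-1/2)\geq 3/2+2(c-\varepsilon)$, which pushes $\tilde l_i(x_i,-1/2)$ above $3/2+c$. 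This absorbs the content of Lemma~\ref{Lemma_3_3} into a two-line computation and eliminates the bookkeeping with $T_0$ and the factors of $3$ and $9$. Both routes rest on the same fact---$l_{cyl}(y,t)$ is strictly increasing in $t$ unless $z(y)=0$---but yours is the more direct implementation.
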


\begin{proof}[Proof of the claim]
Assume the claim does not hold, then for some $c>0$, we can find a sequence of counterexamples $\{(M_i,g_i(t))\}_{i=1}^\infty\subset\mathcal{M}^4(\kappa,C_I)$ and $x_i\in M_i$, such that the following hold.
\begin{enumerate}[(1)]
  \item $R(x_i,0)=1$, and
  \item $(x_i,0)$ is the center of a $i^{-1}$-neck, and
  \item $\displaystyle l_i(x_i,\bar{t}_i)\geq \frac{3}{2}+c$, where $l_i$ is the reduced distance function based at $(x_i,0)$ and $\bar{t}_i\leq-i$.
\end{enumerate}
First we let $\displaystyle \varepsilon_1=\min\left\{\frac{1}{3},\varepsilon_1\left(c,\frac{1}{3}\right)\right\}$, where $\varepsilon_1(.,.)$ is the function defined in Lemma \ref{Lemma_3_3}. Next, we let $\displaystyle T_2=T_2\left(\varepsilon_1,\frac{3}{2}+2c\right)$, where $T_2(.,.)$ is the function defined in Lemma \ref{Lemma_3_4}. By Lemma \ref{Lemma_2_1} and by passing to a subsequence, we can assume $\{(M_i,g_i(t),(x_i,0),l_i)\}_{i=1}^\infty$ converge in the pointed smooth sense to a Type I $\kappa$-solution $(M_\infty,g_{\infty}(t),(x_\infty,0),l_\infty)$, which can be nothing but the shrinking cylinder with $R_\infty\equiv 1$ at $t=0$, since the time $0$ slice of the limit flow is exactly the cylinder, and the backward uniqueness of Kotschwar \cite{kotschwar2010backwards} implies that the limit flow is indeed the shrinking cylinder. Moreover, $l_\infty$ is the reduced distance based at $x_\infty$. Hence by Lemma \ref{Lemma_2_3}(a) and by the convergence fact, we can find a $T_0\in(-\infty,10T_2)$, such that $\displaystyle l_i(x_i,t)\leq \frac{3}{2}+\frac{c}{2}$, for all $\displaystyle t\in\left[T_0,\frac{T_0}{9}\right]$ and for all $i$ large enough. Let $\displaystyle t_i=\max\left\{t\leq T_0:l_i(x_i,t)\geq \frac{3}{2}+c\right\}$. Then we have not only that $t_i<T_0$ but also that $\displaystyle l_i(x_i,t)<\frac{3}{2}+c$ for all $t\displaystyle \in\left[\frac{t_i}{3},\frac{t_i}{9}\right]\subset(-\infty, T_2]$ and for all $i$ large enough. Moreover we have that $\displaystyle l_i(x_i,t_i)=\frac{3}{2}+c$. Now we apply Lemma \ref{Lemma_3_4} to $g_i(t)$ when $i$ is large, with $A$ taken to be $\displaystyle\frac{1}{3}$ and $t_0$ taken to be $\displaystyle\frac{t_i}{3}$. It follows that
\begin{eqnarray}\label{eq:inter}
\left(M,\frac{3}{|t_i|}g\left(t\frac{|t_i|}{3}\right),(x_i,-1),l_i\left(t\frac{|t_i|}{3}\right)\right)
\end{eqnarray}
is $\varepsilon_1$-close to $(Cyl,g_{cyl}(t),(y_i,-1),l_{cyl})$ on $\displaystyle\left[-3,-\frac{1}{3}\right]$, where $y_i\in Cyl$ and for all large $i$. Then applying Lemma \ref{Lemma_3_3} to the scaled flows (\ref{eq:inter}) with $\displaystyle A=\frac{1}{3}$, we have that $\displaystyle l_i\left(t\frac{|t_i|}{3}\right)<\frac{3}{2}+c$ for all $t\in[-3,-1]$ and for all large $i$.
In particular $\displaystyle l_i(x_i,t_i)<\frac{3}{2}+c$; this is a contradiction.
\end{proof}

\bigskip

Now we continue the proof of Proposition \ref{Proposition_Main}. Let $\delta_0$ and $\delta_1$ be the two constants in the proposition, such that $\delta_0$, $\delta_1\in(0,\min\{\delta(1),1\})$, where $\delta(1)$ is defined in the claim. We only need to let $T=\min\{T_3(1),T_2(\delta_1,4)\}$, where $T_3$ is defined in the claim and $T_2$ is defined in Lemma \ref{Lemma_3_4}.

\end{proof}

\section{Proof of the main theorem}

With the preparation in the previous sections, we are ready to prove our main theorem. This proof has essentially the same idea as our proof in \cite{zhang2017three}. On the other hand, the results in \cite{ni2009closed} and \cite{zhang2017three} are also used in the proof of Theorem \ref{Theorem_Main}. For the sake of simplicity, we continue to use the notation defined in Definition \ref{Definition_3_0}.
\\

\newtheorem{Lemma_4_1}{Lemma}[section]
\begin{Lemma_4_1} \label{Lemma_4_1}
Let $(M,g(t))\in \mathcal{M}^4(\kappa,C_I)$. Let $x_0\in M$ be a fixed point and $x_i\in M$ such that $dist_{g_i(0)}(x_i,x_0)\rightarrow\infty$. Denote $Q_i:=R(x_i,0)$ and $g_i(t):=Q_ig(tQ_i^{-1})$. Then any convergent subsequence of $\{(M_i,g_i(t),(x_i,0))\}_{i=1}^\infty$ converges to the shrinking round cylinder $\mathbb{S}^3\times \mathbb{R}$.
\end{Lemma_4_1}

\begin{proof}
By the proof of Lemma \ref{Lemma_3} and the scaling invariance of the reduced volume we have that $\lim_{t\rightarrow\infty}\mathcal{V}_i(t)=1-\bar{\mu}$ for all $i$, where $\mathcal{V}_i(t)$ is the reduced volume of $g_i(t)$ based at $(x_i,0)$ and $1-\bar{\mu}$ is the asymptotic reduced volume of the cylindrical shrinker. We denote the reduced distance function of $g_i(t)$ based at $(x_i,0)$ as $l_i$. It then follows from the monotonicity of the reduced volume that $\mathcal{V}_i(t)\geq 1-\bar{\mu}$ for every $t\in(-\infty,0)$ and for every $i$. Moreover, in view of the fact that the Type I curvature bound condition is invariant under our scaling procedure, we have that $(M,g_i(t))\in\mathcal{M}^4(\kappa,C_I)$, for every $i$. We can use Lemma \ref{Lemma_2_1} to extract a limit from $\{(M,g_i(t),(x_i,0),l_i)\}_{i=1}^\infty$, which we denote as $(M_\infty,g_\infty(t),(x_\infty,0),l_\infty)$. In particular $(M_\infty,g_\infty(t))$ is also Type I. Then we have that $\mathcal{V}_\infty(t)\geq 1-\bar{\mu}$ for all $t\in(-\infty,0)$ and that the asymptotic shrinker of $(M_\infty,g_\infty(t))$ cannot be anything but the cylinder.
\\

On the other hand, by Perelman \cite{perelman2002entropy} we know that $(M_\infty,g_\infty(t))$ splits as $(N^3\times\mathbb{R},g_N(t)+dz^2)$, where $(N^3,g_N(t))$ must be a three-dimensional Type I $\kappa$-solution, the only possibilities of which are the sphere, the round cylinder, and their quotients, see \cite{ni2009closed} and \cite{zhang2017three}. In each case $(M_\infty,g_\infty(t))$ has a shrinker structure and is self-similar. Therefore the fixed-base-point blow-down backward limit of $(M_\infty,g_\infty(t))$ is itself up to scaling. Hence $(M_\infty,g_\infty(t))$ is the shrinking cylinder.
\end{proof}

\bigskip
From this point on, we will fix a small $\varepsilon\ll\min\{\delta(\kappa,C_I),\frac{1}{100}\}$, where $\delta(\kappa,C_I)$ is defined in Proposition \ref{Proposition_Main}. The following lemma is similar to Lemma 5 in \cite{zhang2017three}.

\newtheorem{Lemma_4_2}[Lemma_4_1]{Lemma}
\begin{Lemma_4_2}\label{Lemma_4_2}
Let $(M,g(t))\in \mathcal{M}^4(\kappa,C_I)$ and $x_0\in M$ be a fixed point. Assume that $g(t)$ has strictly positive curvature operator for every $t\in(-\infty,0]$. Then for every $t\in(-\infty,0]$, there exists $x(t)\in M$, such that $(x(t),t)$ is \textbf{\emph{not}} the center of an $\varepsilon$-neck. Furthermore, $dist_{g(0)}(x_0,x(t))\rightarrow\infty$ as $t\rightarrow-\infty$.
\end{Lemma_4_2}

\begin{proof}
First we show that for every $t\in(-\infty,0]$ such $x(t)$ exists. By the Gromoll-Meyer theorem, we know that $M$ is diffeomorphic to $\mathbb{R}^4$. Suppose by contradiction that at some $t\in(-\infty,0]$ every point on $M$ is the center of an $\varepsilon$-neck. If $\varepsilon$ is taken small enough, we have that $M$ must be an $\varepsilon$-tube, and it splits as $\mathbb{S}^3\times\mathbb{R}$ since it has two ends; this is a contradiction. One may refer to Proposition A.21 in Morgan-Tian \cite{morgan2007ricci} for a detailed argument. Notice that even they work with three-dimensional geometry, in the case when every point is the center of an $\varepsilon$-neck, nothing essential dependent on the dimension is used.
\\

Next, we assume by contradiction that there exist $t_i\searrow-\infty$ such that $dist_{g(0)}(x_0,x(t_i))\leq C$, where $C$ is a constant. We show the following claim

\newtheorem*{Claim_2}{Claim}
\begin{Claim_2}
\begin{eqnarray}
dist_{g(t_i)}(x_i,x_0)&\leq& C_1+C_1\sqrt{|t_i|}, \label{eq:dist_distort}
\\
l(x_i,t_i)&\leq& C_1, \label{eq:dist_distort_1}
\end{eqnarray}
for all $i$, where $l$ is the reduced distance based at $(x_0,0)$ and $C$ depends only on $C_I$.
\end{Claim_2}

\begin{proof}[Proof of the claim]
Recall Perelman's distance distortion estimate (Lemma 8.3(b) in \cite{perelman2002entropy}): if $Ric\leq (n-1)K$ on $B_{g(t_0)}(y_0,r)\bigcup B_{g(t_0)}(y_1,r)$, where $r>0$ and $(y_0,t_0)$, $(y_1,t_0)$ are two space-time points in a Ricci flow such that $dist_{g(t_0)}(y_0,y_1)>2r$, then it holds that
\begin{eqnarray}\label{eq:dist_distort_2}
\frac{d}{dt}dist_{g(t)}(y_0,y_1)\geq-2(n-1)\left(\frac{2}{3}Kr+r^{-1}\right)
\end{eqnarray}
when $t=t_0$. Applying the Type I curvature bound and $r=|t|^{\frac{1}{2}}$ to (\ref{eq:dist_distort_2}), we have that
\begin{eqnarray}\label{eq:dist_distort_3}
\frac{d}{dt}dist_{g(t)}(x_0,x_i)\geq -C_2|t|^{-\frac{1}{2}}
\end{eqnarray}
for every $i$ and whenever $dist_{g(t)}(x_0,x_i)>2|t|^{\frac{1}{2}}$, where $C_2$ depends only on $C_I$. Integrating (\ref{eq:dist_distort_3}) from $0$ to $t_i$, (\ref{eq:dist_distort}) follows. (\ref{eq:dist_distort_1}) is an immediate consequence of Proposition 2.2 in Naber \cite{naber2010noncompact}.
\end{proof}
\bigskip

By Lemma \ref{Lemma_3} we have that $(M,|t_i|^{-1}g(t|t_i|),(x(t_i),-1))$ converges to the shrinking cylinder. This is a contradiction, since we assumed that every $(x(t_i),t_i)$ is not the center of an $\varepsilon$-neck.
\end{proof}

\bigskip

\begin{proof}[Proof of Theorem \ref{Theorem_Main}]
By Hamilton's strong maximum principle \cite{hamilton1986four}, we know that the null space of the curvature operator and the local holonomy (that is, the image of the curvature operator) must be invariant under parallel translation and independent of time. Hamilton uses maximum principle on closed manifold, one may refer to Sections 12.3 and 12.4 in \cite{chow2008ricci} for the same maximum principle on noncompact manifold. Let $\mathfrak{g}=Rm(\bigwedge^2TM)$ be the local holonomy group, we need only to consider the following $6$ cases. The details of the following argument can be found in Hamilton \cite{hamilton1986four}

\begin{enumerate}[(1)]
  \item $\mathfrak{g}=\{0\}$.\\
  In this case $g(t)$ is flat, which is not possible.

  \item $\mathfrak{g}=\mathfrak{so}(2)$.\\
  In this case $(M,g(t))$ is isometric to $\mathbb{S}^2\times\mathbb{R}^2$, whose asymptotic shrinker is not $\mathbb{S}^3\times\mathbb{R}$.

  \item $\mathfrak{g}=\mathfrak{so}(2)\times \mathfrak{so}(2)$.\\
  In this case $(M,g(t))$ is isometric to $\mathbb{S}^2\times\mathbb{S}^2$, whose asymptotic shrinker is not $\mathbb{S}^3\times\mathbb{R}$.

  \item $\mathfrak{g}=\mathfrak{so}(3)$.\\
  In this case we have that $(M,g(t))$ is isometric to $\mathbb{S}^3\times\mathbb{R}$.

  \item $\mathfrak{g}=\mathfrak{so}(3)\times \mathfrak{so}(2)$.\\
  In this case $g(t)$ has a K\"{a}hler structure for every $t$. It follows that the asymptotic shrinker of $(M,g(t))$ is a K\"{a}hler shrinker, but by Theorem 3 of \cite{ni2005ancient}, $\mathbb{S}^3\times\mathbb{R}$ is not K\"{a}hler; this case is not possible.

  \item $\mathfrak{g}=\mathfrak{so}(3)\times \mathfrak{so}(3)$.\\
  In this case we have that $(M,g(t))$ has strictly positive curvature operator at every time slice, which we will discuss in the rest of the proof.
  \end{enumerate}

Assume $g(t)$ has strictly positive curvature operator on every time slice. Let $\kappa$, $C_I$ be the constants such that $(M,g(t))\in\mathcal{M}^4(\kappa,C_I)$. Let $t_i\searrow-\infty$ and $x_i\in M$ be such that $(x_i,t_i)$ is \textbf{\textit{not}} the center of an $\varepsilon$-neck. By Lemma \ref{Lemma_4_2}, we have that $dist_{g(0)}(x_0,x_i)\rightarrow\infty$, where $x_0$ is a fixed point on $M$. We define $g_i(t):=R(x_i,0)g(tR(x_i,0)^{-1})$.

\newtheorem*{Claim_1}{Claim}
\begin{Claim_1}
\begin{eqnarray}\label{eq:tau}
\tau_i:=t_iR(x_i,0)\geq T,
\end{eqnarray}
for all large $i$, where $T=T(\kappa,\varepsilon,\varepsilon,C_I)\in(-\infty,0)$ is defined in Proposition \ref{Proposition_Main}.
\end{Claim_1}

\begin{proof}[Proof of the claim]
We consider the scaled flows $g_i(t)$. Suppose the claim is not true, by passing to a subsequence we can assume $\tau_i< T$ for every $i$. By Lemma \ref{Lemma_4_1}, we know that $(x_i,0)$ is the center of an $\varepsilon$-neck when $i$ is large. Since $R_i(x_i,0)=1$ and by Proposition \ref{Proposition_Main} we have that $(x_i,\tau_i)$ is the center of an $\varepsilon$-neck. By the definition of $g_i(t)$ and $\tau_i$, we have that $g_i(\tau_i)=R(x_i,0)g(\tau_iR(x_i,0)^{-1})=R(x_i,0)g(t_i)$, and by our assumption $(x_i,t_i)$ is not the center of an $\varepsilon$-neck in the original flow $(M,g(t))$; this is a contradiction. Notice that the $\varepsilon$-necklike property is scaling invariant.

\end{proof}

We continue the proof of the theorem. Since $\{(M,g_i(t),(x_i,0))\}_{i=1}^\infty$ converges to the shrinking cylinder, we have that for a fixed $A\in[|T|,\infty)$, every point in $B_{g_i(0)}(x_i, A)\times [-A,0]$ is $\varepsilon$-necklike when $i$ is large. In particular, $(x_i,\tau_i)$ is the center of an $\varepsilon$-neck. This yields a contradiction against the definition of $\tau_i$ and the assumption on $t_i$.

\end{proof}

\bigskip

\textbf{Acknowledgement:} The author would like to thank his doctoral advisors, Professor Bennett Chow and Professor Lei Ni, for their constant support and priceless advices.

\bibliographystyle{plain}
\bibliography{citation}

\noindent Department of Mathematics, University of California, San Diego, CA, 92093
\\ E-mail address: \verb"yoz020@ucsd.edu"

\end{document}